\newtheorem{Thm}{Theorem} 
\newtheorem{Prop}[Thm]{Proposition}
\numberwithin{equation}{section}
\renewcommand{\phi}{\varphi}
\newcommand{\C}{\mathrm{C}}
\newcommand{\N}{\mathrm{N}}
\newcommand{\pcore}{\mathrm{O}}
\newcommand{\ZZ}{\mathbb{Z}}
\newcommand{\CC}{\mathbb{C}}
\newcommand{\FF}{\mathbb{F}}
\newcommand{\GL}{\operatorname{GL}}
\newcommand{\PGL}{\operatorname{PGL}}
\newcommand{\PSL}{\operatorname{PSL}}
\newcommand{\Irr}{\operatorname{Irr}}
\newcommand{\Syl}{\operatorname{Syl}}
\newcommand{\Hall}{\operatorname{Hall}}
\title{On a fixed point formula of Navarro--Rizo}
\author{Benjamin Sambale\footnote{Institut für Algebra, Zahlentheorie und Diskrete Mathematik, Leibniz Universität Hannover, Welfengarten 1, 30167 Hannover, Germany,
\href{mailto:sambale@math.uni-hannover.de}{sambale@math.uni-hannover.de}}}
\date{\today}
\begin{document}
\frenchspacing
\maketitle
\begin{abstract}\noindent
Let $G$ be a $\pi$-separable group with a Hall $\pi$-subgroup $H$ or order $n$. For $x\in H$ let $\lambda(x)$ be the number of Hall $\pi$-subgroups of $G$ containing $x$. We show that $\prod_{d\mid n}\prod_{x\in H}\lambda(x^{d})^{\frac{n}{d}\mu(d)}=1$, where  $\mu$ is the Möbius function.
This generalizes fixed point formulas for coprime actions by Brauer, Wielandt and Navarro--Rizo. We further investigate an additive version of this formula.
\end{abstract}
\textbf{Keywords:} fixed points, coprime action, Sylow subgroups, $p$-solvable groups, $\pi$-separable groups\\
\textbf{AMS classification:} 20D10, 20D20

\section{Introduction}

Navarro and Rizo~\cite{NR} proved the following fixed point equation related to formulas of Brauer and Wielandt.

\begin{Thm}[Navarro--Rizo]\label{NR}
Let $P$ be a finite $p$-group acting on a $p'$-group $N$. Then
\begin{equation}\label{eq:NR}
|\C_N(P)|=\Bigl(\prod_{x\in P}\frac{|\C_N(x)|}{|\C_N(x^p)|^{1/p}}\Bigr)^{\frac{p}{|P|(p-1)}}.
\end{equation}
\end{Thm}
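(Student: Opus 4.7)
The plan is to take logarithms of~(\ref{eq:NR}) and exploit multiplicativity in $N$ to reduce to the case where $N$ is an elementary abelian $q$-group for some prime $q\neq p$. Coprime action produces a $P$-invariant Sylow $q$-subgroup $Q_q$ of $N$ for each prime $q$, and $\C_N(x)_q=\C_{Q_q}(x)$ for every $x\in P$, so $|\C_N(x)|=\prod_q|\C_{Q_q}(x)|$ and the identity factors over the primes dividing $|N|$. Within each such $Q_q$, the coprime-action formula $\C_{N/M}(x)=\C_N(x)M/M$ (valid for any $P$-invariant $M\trianglelefteq N$) shows that~(\ref{eq:NR}) further factors along any $P$-invariant composition series; filtering by the derived series and then by powers of $q$, I arrive at $N=V$ an elementary abelian $q$-group regarded as an $\FF_q[P]$-module with $q\neq p$.

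Since $\gcd(q,|P|)=1$, the module $V$ lifts to an ordinary character $\chi$ of $P$ with $|\C_V(H)|=q^{\langle\chi|_H,1_H\rangle}$ for every $H\leq P$. Substituting and dividing exponents by $\log q$, the theorem becomes the following identity, to be verified for every character $\chi$ of $P$:
\begin{equation*}
\langle\chi,1_P\rangle=\frac{p}{|P|(p-1)}\sum_{x\in P}\Bigl(\langle\chi|_{\langle x\rangle},1\rangle-\tfrac1p\langle\chi|_{\langle x^p\rangle},1\rangle\Bigr).
\end{equation*}
By linearity it suffices to treat irreducible $\chi$. The trivial character is a direct check. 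For nontrivial irreducible $\chi$ the left-hand side vanishes, leaving
\begin{equation*}
\sum_{x\in P}\langle\chi|_{\langle x\rangle},1\rangle=\tfrac1p\sum_{x\in P}\langle\chi|_{\langle x^p\rangle},1\rangle
\end{equation*}
to be established.

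To prove this, I would expand $\langle\chi|_{\langle x\rangle},1\rangle=|\langle x\rangle|^{-1}\sum_{y\in\langle x\rangle}\chi(y)$, swap the order of summation, and group the variable $x$ by the cyclic subgroup $C=\langle x\rangle$. Two facts drive the calculation: every nontrivial cyclic subgroup of a $p$-group satisfies $\phi(|C|)/|C|=1-1/p$, and the unique index-$p$ subgroup of $\langle x\rangle$ is precisely $\langle x^p\rangle$. After isolating the contribution of $y=1$ on each side and invoking the column-orthogonality relation $\sum_{y\in P}\chi(y)=0$, both sides should collapse to the common expression
\begin{equation*}
\tfrac{\chi(1)}{p}+\tfrac{p-1}{p}\sum_{y\in P}\chi(y)\,c(y),
\end{equation*}
where $c(y)$ denotes the number of cyclic subgroups of $P$ containing $y$.

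The main obstacle I anticipate is the bookkeeping in this last combinatorial step: handling the boundary case $y=1$ on each side and correctly tracking how the generator count $\phi(|C|)$ interacts with the unique index-$p$ subgroup of each cyclic $\langle x\rangle$. The reductions to a $q$-group and then to an elementary abelian module are standard consequences of coprime action but must be stated carefully. An alternative route would be induction on $|P|$ via a central subgroup of order $p$; however, the direct character-theoretic verification above appears cleaner and more likely to extend to the generalizations pursued later in the paper.
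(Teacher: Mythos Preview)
Your argument is correct. The reduction to an $\FF_qP$-module via a $P$-invariant Sylow subgroup and a characteristic filtration is standard coprime-action material, and the character identity does collapse exactly as you predict: for every nontrivial cyclic $C\le P$ one has $\phi(|C|)/|C|=(p-1)/p$, and $y\in\langle x^p\rangle$ if and only if $y\in\langle x\rangle$ with $\langle y\rangle\ne\langle x\rangle$, so after swapping sums both sides become $\chi(1)/p+\tfrac{p-1}{p}\sum_{y\in P}\chi(y)\,c(y)$ once one uses $\sum_{y\in P}\chi(y)=0$. (A terminological quibble: that last relation is the \emph{first}, or row, orthogonality relation, not column orthogonality.)

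The comparison with the paper is twofold. The paper does not give its own proof of \autoref{NR} but quotes it from~\cite{NR}, and it remarks explicitly that the Navarro--Rizo argument \emph{avoids} representation theory, exploiting instead the nilpotence of $P$; the inductive route via a central subgroup of order $p$ that you mention as an alternative is closer in spirit to their approach. Your character-theoretic proof, on the other hand, is very much in the spirit of the paper's proof of the generalization \autoref{piformula}: there too one reduces to $G=NH$, passes to $N$ elementary abelian (the paper invokes Hartley--Turull rather than building a filtration by hand), and finishes with an orthogonality computation inside the proof of \autoref{wielandt}. The difference is that the paper routes the computation through a M\"obius inversion over the poset of cyclic subgroups of $H$ in order to handle a Hall subgroup of arbitrary shape, whereas you shortcut this by using the $p$-group-specific identity $\phi(|C|)/|C|=(p-1)/p$. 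Your approach is therefore cleaner for $\pi=\{p\}$ but, contrary to your closing remark, does not extend directly to general $\pi$; that extension is precisely what the M\"obius machinery buys.
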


For a finite group $G$ and $x\in P\in\Syl_p(G)$ let $\lambda_G(x)$ be the number of Sylow $p$-subgroups of $G$ containing $x$.
In the situation of \autoref{NR}, the Sylow $p$-subgroups of $G:=N\rtimes P$ have the form $nPn^{-1}$ for $n\in N$. Note that $\N_G(P)=\C_N(P)P$. 
Suppose that $x\in P\cap nPn^{-1}$. Then there exists $y\in P$ such that $x=nyn^{-1}$. Since $[n,y]=xy^{-1}\in P\cap N=1$, it follows that $x=y$ and $n\in\C_N(x)$. Hence, $\lambda_G(x)=|\C_N(x):\C_N(P)|$ for all $x\in P$. Now \eqref{eq:NR} turns into the more elegant formula:
\begin{equation}\label{eq:NR2}
\prod_{x\in P}\lambda_G(x^p)=\prod_{x\in P}\lambda_G(x)^p.
\end{equation}

We show that this holds more generally for all $p$-solvable groups. In fact, our main theorem applies to $\pi$-separable groups, where $\pi$ is any set of primes. Recall that a $\pi$-separable group $G$ has a unique conjugacy class of Hall $\pi$-subgroups. For a $\pi$-element $x\in G$ let $\lambda_G(x)$ be the number of Hall $\pi$-subgroups of $G$ containing $x$. 

\begin{Thm}\label{piformula}
Let $G$ be a $\pi$-separable group with a Hall $\pi$-subgroup $H$ of order $n$. Then
\begin{equation}\label{eq:hall}
\prod_{d\,\mid\, n}\Bigl(\prod_{x\in H}\lambda_G\bigl(x^{d}\bigr)^{\frac{n}{d}}\Bigr)^{\mu(d)}=1,
\end{equation}
where $\mu$ is the Möbius function.
\end{Thm}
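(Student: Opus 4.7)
The plan is to argue by induction on $|G|$; the cases $G=1$ and $G$ a $\pi$-group are trivial, since in the latter $H=G$ and $\lambda_G\equiv 1$.

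\emph{Reduction via the $\pi'$-core.} Assume $M:=\pcore_{\pi'}(G)>1$. Generalising the paper's computation in the split case, every Hall $\pi$-subgroup of $G$ projects onto a Hall $\pi$-subgroup of $G/M$, and Schur--Zassenhaus identifies the Hall $\pi$-subgroups lying above a fixed Hall $\pi$-subgroup $\bar K\le G/M$ with the complements of $M$ in the preimage $M\bar K$. Applying the same Navarro--Rizo count to each preimage gives $|\C_M(x):\C_M(K)|$ complements containing $x$; the factor $|\C_M(K)|=|\C_M(H)|$ is independent of the choice of $K$ since Hall $\pi$-subgroups are $G$-conjugate while $M\trianglelefteq G$. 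This yields the factorisation
\[
\lambda_G(x)=\frac{|\C_M(x)|}{|\C_M(H)|}\,\lambda_{G/M}(xM)\qquad (x\in H).
\]
Substituting this into \eqref{eq:hall} and invoking $\sum_{d\mid n}\mu(d)(n/d)=\phi(n)$ yields the product
\[
\Bigl(\prod_{d,x}\lambda_{G/M}(x^dM)^{(n/d)\mu(d)}\Bigr)\cdot|\C_M(H)|^{-n\phi(n)}\prod_{d,x}|\C_M(x^d)|^{(n/d)\mu(d)},
\]
whose first factor equals $1$ by induction applied to $G/M$ (whose Hall $\pi$-subgroup $HM/M\cong H$ still has order $n$). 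The theorem therefore reduces, in this case, to the coprime-action identity
\[
\prod_{d\mid n}\prod_{x\in H}|\C_M(x^d)|^{(n/d)\mu(d)}=|\C_M(H)|^{n\phi(n)}.\tag{$\star$}
\]

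\emph{Proving $(\star)$.} For cyclic $H=\langle g\rangle$ the elementary identity $\sum_{x\in H}f(x^d)=d\sum_{y\in\langle g^d\rangle}f(y)$ collapses the Möbius sum to $n\sum_{\ord(y)=n}\log|\C_M(y)|$, and this equals $n\phi(n)\log|\C_M(H)|$ since $\C_M(y)=\C_M(\langle g\rangle)=\C_M(H)$ for each of the $\phi(n)$ generators of $H$. For a general (non-cyclic) $H$, $(\star)$ must be proved for arbitrary coprime actions; I would attempt an induction on $|H|$ reducing to the cyclic instances via Möbius inversion on the subgroup lattice of $H$ (exploiting that $|\C_M(x)|$ depends only on $\langle x\rangle$), possibly in conjunction with Brauer--Wielandt-type fixed-point identities. \textbf{This step is the main obstacle I foresee}, since $(\star)$ substantially extends the classical Brauer--Wielandt formulas.

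\emph{The case $\pcore_{\pi'}(G)=1$.} Now $K:=\pcore_\pi(G)\le H$ is non-trivial and every Hall $\pi$-subgroup contains $K$, so $\lambda_G(x)=\lambda_{G/K}(xK)$. Re-indexing by $K$-cosets turns the left-hand side of \eqref{eq:hall} into $\prod_{d,\bar x}\lambda_{G/K}(\bar x^d)^{|K|(n/d)\mu(d)}$. A prime $p$ dividing $n$ but not $\bar n:=n/|K|$ is coprime to $|H/K|=\bar n$, so the $p$-power map is a bijection on $H/K$, which gives $F(pd'):=\sum_{\bar x}\log\lambda_{G/K}(\bar x^{pd'})=\sum_{\bar x}\log\lambda_{G/K}(\bar x^{d'})=F(d')$ for every squarefree $d'$ coprime to $p$. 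Splitting any squarefree $d\mid n$ as $d=d_1d_2$ with $d_1\mid\bar n$ and $d_2$ a product of primes dividing $n$ but not $\bar n$, the Möbius sum accordingly factors as
\[
\Bigl(\prod_{p\mid n,\,p\nmid\bar n}\bigl(1-\tfrac{1}{p}\bigr)\Bigr)\cdot|K|\sum_{d\mid\bar n}\mu(d)(\bar n/d)F(d),
\]
whose last factor vanishes by the inductive hypothesis applied to $G/K$.
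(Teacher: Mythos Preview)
Your inductive architecture is essentially the paper's: reduce via the $\pi$- and $\pi'$-cores until the claim becomes a pure coprime-action identity, then treat that identity separately. Your factorisation $\lambda_G(x)=|\C_M(x):\C_M(H)|\,\lambda_{G/M}(xM)$ and the resulting reduction to $(\star)$ are correct, and your handling of the case $\pcore_{\pi'}(G)=1$ also works (the paper instead first replaces $n$ by $\prod_{p\in\pi}p$, which turns that step into a one-liner, but your direct Möbius factorisation is fine). Your cyclic instance of $(\star)$ is equivalent to the paper's proposition on cyclic Hall subgroups.

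The genuine gap is exactly the one you flag: $(\star)$ for non-cyclic $H$. Your anticipated tools are the right ones, and here is how the paper executes them. Let $\mathcal{Z}$ be the poset of cyclic subgroups of $H$ with Möbius function $\mu(Z,W)=\mu(|W/Z|)$, and set $f(Z):=\sum_{W\in\mathcal{Z}}\mu(Z,W)$. Möbius inversion yields $\prod_{x\in H}\gamma(x)=\prod_{Z\in\mathcal{Z}}\bigl(\prod_{z\in Z}\gamma(z)\bigr)^{f(Z)}$ for any $\gamma$, and combined with $\lambda_G(z)=\lambda_{ZM}(z)\,|\C_M(Z):\C_M(H)|$ this breaks the left side of $(\star)$ into the cyclic pieces $\alpha(ZM)$ (each equal to $1$ by the cyclic case) times the residual factor $\prod_{Z\in\mathcal{Z}}|\C_M(Z):\C_M(H)|^{\phi(n)|Z|f(Z)}$. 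That this residual factor equals $1$ is precisely Wielandt's coprime fixed-point formula
\[|\C_M(H)|^{|H|}=\prod_{Z\in\mathcal{Z}}|\C_M(Z)|^{|Z|f(Z)},\]
together with the elementary count $|H|=\sum_{Z}|Z|f(Z)$. The paper proves Wielandt's formula via the Hartley--Turull linearisation: replace $M$ by an $H$-equivariantly bijective $\FF_pH$-module and read the identity off from orthogonality for the associated (ordinary) character. So your sketch is on target; the specific ``Brauer--Wielandt-type'' input you need is this Wielandt formula, and its proof ultimately rests on Feit--Thompson (to ensure that one of $H$, $M$ is solvable, so that Hartley--Turull applies).
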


For $\pi=\{p\}$, \eqref{eq:hall} becomes \eqref{eq:NR2}. We will show in \autoref{cyclic} that \eqref{eq:hall} holds for arbitrary groups whenever they have a cyclic Hall $\pi$-subgroup (by a result of Wielandt, the Hall $\pi$-subgroups are conjugate in this situation too, see \cite[Satz~III.5.8]{Huppert}). 
In general, the left hand side of \eqref{eq:NR2} can be larger or smaller than the right hand side (consider $G=A_5$ and $G=\GL(3,2)$ for $p=2$). We did not find a non-solvable group fulfilling \eqref{eq:NR2} for $p=2$. 

In the last section we obtain the following additive version of \eqref{eq:hall}.

\begin{Thm}\label{additive}
Let $G$ be a finite group with a Hall subgroup $H$ of order $n$. Then
\[\frac{1}{n^2}\sum_{d\,\mid\, n}\mu(d)\sum_{h\in H}\lambda_G(h^d)^{\frac{n}{d}}\]
is a non-negative integer, which is zero if and only if $1\ne H\unlhd G$.
\end{Thm}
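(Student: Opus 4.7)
The plan is to handle the zero case by direct computation and to interpret the remaining assertions as counting orbits of a free $(\ZZ/n)^2$-action. For the easy direction of the zero characterization, if $1\ne H\unlhd G$ then $H$ has only itself as a $G$-conjugate, so $\lambda_G(x)=1$ for every $x\in H$; hence $\sum_{h\in H}\lambda_G(h^d)^{n/d}=n$ for every divisor $d\mid n$, and the double sum becomes $n\sum_{d\mid n}\mu(d)=0$ since $n>1$.

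For the remaining assertions, I would start from the combinatorial reformulation
\[
S_d := \sum_{h\in H}\lambda_G(h^d)^{n/d} = \bigl|\{(h,\phi)\in H\times(\Omega^n)^{D_d} : h^d\in\phi(i)\text{ for all }i\in\ZZ/n\}\bigr|,
\]
where $\Omega$ is the set of $G$-conjugates of $H$ and $D_d\le\ZZ/n$ is the unique cyclic subgroup of order $d$; a function $\phi$ fixed by $D_d$ corresponds to a periodic lift of an $(n/d)$-tuple in $\Omega^{n/d}$. A Möbius inversion on the divisor lattice of $n$ should then identify $\sum_d\mu(d)S_d$ with the cardinality of a set $X$ of ``primitive'' pairs $(h,\phi)$ in which $\phi$ has minimal period $n$ and the condition on $h$ encodes the twist $h\mapsto h^d$ introduced by the Möbius terms.

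Next I would equip $X$ with two commuting free actions of $\ZZ/n$: the first is the cyclic shift of $\phi$, which is free on primitive $\phi$; the second ought to combine this shift with a power map on $h$ so as to absorb the twist. The quotient $|X|/n^2$ then counts $(\ZZ/n)^2$-orbits and is a non-negative integer. For the ``only if'' direction, if $H$ is not normal then $|\Omega|\ge 2$, and already the $h=1$ contribution---essentially Witt's formula for primitive necklaces---produces a positive count of primitive $\phi$'s, yielding at least one orbit.

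I expect the main obstacle to be the construction of the second $\ZZ/n$-action and the verification of its freeness on the primitive part of $X$. The standard Witt necklace argument handles the untwisted analogue $\sum_d\mu(d)\sum_h\lambda_G(h)^{n/d}$, but the presence of $h^d$ in place of $h$ couples the cyclic shift on $\phi$ to the element $h$ in a non-trivial way. Producing a well-defined second action that respects the twist and showing freeness on primitive configurations is the essential technical step; once that is done, the orbit-counting interpretation simultaneously yields $n^2$-divisibility, non-negativity, and the characterization of the zero case.
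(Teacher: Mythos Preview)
Your handling of the easy direction ($1\ne H\unlhd G\Rightarrow$ zero) is fine and matches the paper. The rest of the proposal, however, is a plan whose decisive step is left open, and that step looks genuinely problematic. The M\"obius inversion you sketch is not the standard one: in $S_d$ the incidence condition is ``$h^d\in\phi(i)$'', so the predicate changes with $d$, and $\sum_d\mu(d)S_d$ does not simply peel off the aperiodic $\phi$'s with a fixed condition on $h$. More seriously, the promised second free $\ZZ/n$-action built from ``a power map on $h$'' is unlikely to exist: $H$ is an arbitrary Hall subgroup of order $n$, not cyclic, and power maps $h\mapsto h^k$ neither give a $\ZZ/n$-action nor are bijections in general. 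Without that action you have no set $X$, so neither the $n^2$-divisibility nor your positivity argument (``the $h=1$ primitive $\phi$'s already give an orbit'') goes through; individual $h$-contributions $\sum_d\mu(d)\lambda_G(h^d)^{n/d}$ can be negative, so summing the positive $h=1$ term against the rest proves nothing by itself.

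The paper proceeds quite differently. It observes that $\lambda_G$ is the permutation character of $H$ on $\Hall_\pi(G)$ and invokes a general construction (Kerber): for any character $\chi$ of $H$ and any character $\alpha$ of a permutation group $A\le S_n$, the class function $\chi_\alpha(h)=|A|^{-1}\sum_{a\in A}\alpha(a)\prod_i\chi(h^i)^{c_i(a)}$ is a genuine character (or zero). Taking $A=\langle(1\,2\,\cdots\,n)\rangle$ and $\alpha$ faithful yields $\chi_\alpha(h)=\tfrac1n\sum_{d\mid n}\mu(d)\lambda_G(h^d)^{n/d}$, whence $\beta_G(H)=[\chi_\alpha,1_H]\in\ZZ_{\ge0}$. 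For positivity when $H\not\trianglelefteq G$ the paper does not use any orbit interpretation at all: it shows $t:=|\Hall_\pi(G)|\ge3$ and then bounds $\sum_h\lambda_G(h)^n\ge t^n$ (Burnside on $\Omega^n$) against the crude upper bound $\sum_{d>1}\sum_h\lambda_G(h^d)^{n/d}\le n\sum_{d>1}t^{n/d}$, checking the resulting elementary inequality. If you want a combinatorial substitute for the Kerber step, the object that actually carries the structure is $\Omega^n$ with the $H\times\ZZ/n$-action (diagonal conjugation times cyclic shift): one checks that the fixed points of $(h,k)$ number $\lambda_G(h^{d})^{n/d}$ with $d$ the order of $k$, so $\beta_G(H)$ equals the multiplicity of $1_H\boxtimes\alpha$ in $\CC[\Omega^n]$, i.e.\ the number of $H\times\ZZ/n$-orbits whose stabilizer projects trivially to $\ZZ/n$. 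That replaces your hoped-for free $(\ZZ/n)^2$-action by an $H\times\ZZ/n$-action and a multiplicity count; it also gives a one-line positivity proof (take $\phi=(H,\ldots,H,K)$ with $K\ne H$), which is sharper than the paper's estimate.
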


\section{The proof of \autoref{piformula}}

In the first step we reduce \autoref{piformula} to $\pi$-nilpotent groups.
Let $\alpha(G)$ be the left hand side of \eqref{eq:hall}. Since
\[\alpha(G)=\Bigl(\prod_{d\,\mid\, n}\prod_{x\in H}\lambda_G(x^d)^{\mu(d)/d}\Bigr)^n,\]
we may replace $n$ by the product of the prime divisors of $|H|$. Suppose that $p\in\pi$ does not divide $n$. Then $\langle x^d\rangle=\langle x^{dp}\rangle$ for all $x\in H$ and $d\mid n$. Since $\lambda_G(x^d)$ only depends on $\langle x^d\rangle$, it follows that
\[\prod_{d\,\mid\, np}\prod_{x\in H}\lambda_G(x^d)^{\frac{np}{d}\mu(d)}=\prod_{d\,\mid\, n}\prod_{x\in H}\lambda_G(x^d)^{\frac{np}{d}\mu(d)}\lambda_G(x^d)^{-\frac{n}{d}\mu(d)}=\alpha(G)^{p-1}.\]
Hence, we can assume that $n=\prod_{p\in\pi}p$. 

Suppose that $N:=\pcore_{\pi}(G)\ne 1$. Since $N$ lies in every Hall $\pi$-subgroup of $G$, we have $\lambda_G(xy)=\lambda_G(x)$ for all $x\in H$ and $y\in N$. Moreover, $\lambda_G(x)=\lambda_{G/N}(xN)$. 
By induction on $|G|$, we obtain
\[\alpha(G)=\prod_{x\in H}\prod_{d\,\mid\, n}\lambda_G(x^d)^{\frac{n}{d}\mu(d)}=\prod_{xN\in H/N}\Bigl(\prod_{d\,\mid\, n}\lambda_{G/N}(x^dN)^{\frac{n}{d}\mu(d)}\Bigr)^{|N|}=\alpha(G/N)^{|N|}=1.\]
Thus, we may assume that $\pcore_{\pi}(G)=1$. Then $N:=\pcore_{\pi'}(G)\ne 1$. By the argument from the introduction, we have $\lambda_{HN}(x)=|\C_N(x):\C_N(H)|$ for $x\in H$.
If $x$ lies in another Hall $\pi$-subgroup $K$, then 
\[\lambda_{HN}(x)=|\C_N(x):\C_N(H)|=|\C_N(x):\C_N(K)|=\lambda_{KN}(x),\] 
because $H$ and $K$ are conjugate in $G$. It follows that $\lambda_G(x)=\lambda_{HN}(x)\lambda_{G/N}(xN)$. Hence, by the same argument as before, $\alpha(G)=\alpha(HN)\alpha(G/N)$. By induction on $|G|$, we may assume that $G=HN$, i.\,e. $G$ is a $\pi$-nilpotent group. 

Next, we reduce to the case where $H$ is cyclic. Under this assumption, the result holds for arbitrary groups.

\begin{Prop}\label{cyclic}
Let $G$ be a group with a cyclic Hall subgroup $H$. Then \eqref{eq:hall} holds.
\end{Prop}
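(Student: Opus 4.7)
The plan is to leverage the cyclic structure of $H$: for $y\in H$, the value $\lambda_G(y)$ depends only on $\ord(y)$, which allows the left hand side of \eqref{eq:hall} to be transformed into an expression to which standard Möbius inversion applies directly.

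Set $n:=|H|$, and for each $e\mid n$ let $H_e$ denote the unique subgroup of $H$ of order $e$ and $\lambda_e:=\lambda_G(y)$ for any $y\in H$ of order $e$. This is well-defined since $\lambda_G(y)$ depends only on $\langle y\rangle=H_e$. The decisive observation is that $\lambda_n=1$: if $x\in H$ has order $n$, then $\langle x\rangle=H$, so any Hall subgroup $K$ containing $x$ must satisfy $H\subseteq K$, and $|K|=n=|H|$ forces $K=H$. (Note that this does not require conjugacy of the Hall subgroups.)

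Next, for $d\mid n$, the $d$-th power map on $H$ is a $d$-to-$1$ surjection onto $H_{n/d}$. Grouping the elements of $H_{n/d}$ by order yields
\[
\sum_{x\in H}\log\lambda_G(x^d)\;=\;d\sum_{y\in H_{n/d}}\log\lambda_G(y)\;=\;d\sum_{e\,\mid\, n/d}\phi(e)\log\lambda_e\;=\;d\cdot S(n/d),
\]
where $\phi$ is Euler's totient and $S(m):=\sum_{e\,\mid\, m}\phi(e)\log\lambda_e$. Taking the logarithm of \eqref{eq:hall}, the claim reduces to $n\sum_{d\,\mid\, n}\mu(d)\,S(n/d)=0$; since $S$ is the Dirichlet sum of $e\mapsto\phi(e)\log\lambda_e$, classical Möbius inversion gives $\sum_{d\,\mid\, n}\mu(d)\,S(n/d)=\phi(n)\log\lambda_n$, which vanishes by $\lambda_n=1$.

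The one slightly non-routine step is recognizing that $x\mapsto x^d$ is a $d$-to-$1$ surjection onto $H_{n/d}$ and that the exponent $\frac{n}{d}\mu(d)$ in \eqref{eq:hall} therefore combines with the factor $d$ in the above identity to produce exactly the Möbius inversion of the arithmetic function $e\mapsto\phi(e)\log\lambda_e$; after this observation, the proof is essentially a one-line application of a classical identity.
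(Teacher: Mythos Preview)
Your proof is correct and follows essentially the same approach as the paper: both arguments rest on the observation that $\lambda_G(h)=1$ for generators of $H$, that $x\mapsto x^d$ is $d$-to-$1$ onto $H_{n/d}$, and that a Möbius identity then kills the remaining exponents. The only difference is organizational: the paper fixes an element $h$ of order $n/e$ and shows its total exponent $\sum_{d\mid e} d\cdot\tfrac{n}{d}\mu(d)=n\sum_{d\mid e}\mu(d)$ vanishes for $e>1$, whereas you group elements by order, package the computation as a Dirichlet convolution $S(m)=\sum_{e\mid m}\phi(e)\log\lambda_e$, and invert globally to get $\phi(n)\log\lambda_n=0$.
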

\begin{proof}
For every generator $h$ of $H$ we have $\lambda_G(h)=1$. Now let $h\in H$ be of order $\frac{n}{e}<n$. Then for every $d\mid e$ there exist exactly $d$ elements $x\in H$ such that $x^d=h$. Hence, the exponent of $\lambda_G(h)$ in \eqref{eq:hall} is \[\sum_{d\,\mid\, e}d\frac{n}{d}\mu(d)=n\sum_{d\,\mid\, e}\mu(d)=0.\qedhere\]
\end{proof}

For the general case, let $\mathcal{Z}$ be the set of cyclic subgroups of $H$. Instead of running over all elements of $H$, we run over $Z\in\mathcal{Z}$ and then over $z\in Z$. In order to track multiplicity, we use the Möbius function $\mu$ of the lattice $\mathcal{Z}$. Since the subgroups of a cyclic group of order $d$ are in bijection to the divisors of $d$, we have $\mu(Z,W)=\mu(|W/Z|)$ whenever $Z\le W$ and $\mu(Z,W)=0$ otherwise. We define $f(Z):=\sum_{W\in\mathcal{Z}}\mu(Z,W)$.
Recall the inversion formula for Euler's totient function:
\begin{equation}\label{eq:euler}
\phi(n)=\sum_{d\,\mid\, n}\frac{n}{d}\mu(d).
\end{equation}

For $W\in\mathcal{Z}$, let $[W]$ be the set of generators of $W$. For any function $\gamma:G\to\ZZ$, we have 
\[\prod_{w\in W}\gamma(w)=\prod_{Z\le W}\prod_{z\in[Z]}\gamma(z).\] 
By Möbius inversion, it follows that $\prod_{w\in[W]}\gamma(w)=\prod_{Z\le W}\prod_{z\in Z}\gamma(z)^{\mu(Z,W)}$ and 
\begin{equation}\label{eq:moebius}
\prod_{x\in H}\gamma(x)=\prod_{W\in\mathcal{Z}}\prod_{w\in[W]}\gamma(w)=\prod_{W\in\mathcal{Z}}\prod_{Z\le W}\prod_{z\in Z}\gamma(z)^{\mu(Z,W)}=\prod_{Z\in\mathcal{Z}}\Bigl(\prod_{z\in Z}\gamma(z)\Bigr)^{f(Z)}.
\end{equation}
Counting the number of factors on both sides also reveals that 
\begin{equation}\label{eq:fZ}
|H|=\sum_{Z\in\mathcal{Z}}|Z|f(Z).
\end{equation}

Assuming $G=NH$, we have
\begin{equation}\label{eq:lam}
\lambda_G(x)=|\C_N(x):\C_N(H)|=|\C_N(x):\C_N(Z)||\C_N(Z):\C_N(H)|=\lambda_{ZN}(x)|\C_N(Z):\C_N(H)|
\end{equation}
for $x\in Z\in\mathcal{Z}$.
Now we can put everything together and apply \autoref{cyclic}:
\begin{align*}
\alpha(G)&\overset{\eqref{eq:moebius}}{=}\prod_{Z\in\mathcal{Z}}\Bigl(\prod_{z\in Z}\prod_{d\,\mid\,n}\lambda_G(z^d)^{\frac{n}{d}\mu(d)}\Bigr)^{f(Z)}
\overset{\eqref{eq:lam}}{=}\prod_{Z\in\mathcal{Z}}\Bigl(\alpha(NZ)\prod_{d\,\mid\,n}|\C_N(Z):\C_N(H)|^{|Z|\frac{n}{d}\mu(d)}\Bigr)^{f(Z)}\\
&\overset{\eqref{eq:euler}}{=}\prod_{Z\in\mathcal{Z}}|\C_N(Z):\C_N(H)|^{\phi(n)|Z|f(Z)}
\overset{\eqref{eq:fZ}}{=}\Bigl(|\C_N(H)|^{-|H|}\prod_{Z\in\mathcal{Z}}|\C_N(Z)|^{|Z|f(Z)}\Bigr)^{\phi(n)}.
\end{align*}
At this point, the claim follows from Wielandt's formula~\cite[Satz~2.3]{WielandtFix}, which we prove for sake of self-containment.

\begin{Thm}[Wielandt]\label{wielandt}
Let $H$ be a group acting coprimely on a group $N$. Then 
\[|\C_N(H)|^{|H|}=\sum_{Z\in\mathcal{Z}}|\C_N(Z)|^{|Z|f(Z)}.\]
\end{Thm}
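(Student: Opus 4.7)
My plan is to induct on $|N|$ and reduce to the case where $N$ is characteristically simple as an $H$-group. For the inductive step, given any proper $H$-invariant $M\unlhd N$, the standard coprime action identity
\[|\C_N(K)| = |\C_M(K)|\cdot|\C_{N/M}(K)|\qquad(K\le H),\]
proved via the vanishing of $\cohom^1(K,M)$ when $M$ is abelian and extended to nonabelian $M$ through an $H$-chief series of $M$, factors both sides of Wielandt's formula across $M$ and $N/M$. Hence by induction the claim reduces to the indicated base case.

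Suppose first that $N$ is elementary abelian of exponent $p$; then necessarily $p\nmid|H|$ and Maschke makes $N$ a semisimple $\FF_p H$-module. The idempotent $e_K:=\frac{1}{|K|}\sum_{k\in K}k$ projects $N$ onto $\C_N(K)$, giving the identity $|K|\dim_{\FF_p}\C_N(K)=\sum_{k\in K}\chi_N(k)$ for the Brauer (equivalently, ordinary) character $\chi_N$ of $N$. Taking $\log_p$ of Wielandt's formula, the claim becomes $|H|\dim_{\FF_p}\C_N(H)=\sum_{Z\in\mathcal{Z}}|Z|f(Z)\dim_{\FF_p}\C_N(Z)$; substituting the character identity on both sides, this is $\sum_{h\in H}\chi_N(h)=\sum_Z f(Z)\sum_{z\in Z}\chi_N(z)$, which is the additive version of \eqref{eq:moebius} applied to the function $g=\chi_N$.

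The main obstacle is the remaining case $N\cong T^k$ with $T$ nonabelian simple and $H$ permuting the factors. After splitting along $H$-orbits on $\{1,\ldots,k\}$, one reduces to $H$ transitive on the factors; an element of $\C_N(K)$ is then determined by its coordinates on one representative per $K$-orbit on $\{1,\ldots,k\}$, subject to $K$-stabilizer invariance, so $|\C_N(K)|=\prod_{P}|\C_T(K_{i_P})|$ with $P$ ranging over $K$-orbits and $K_{i_P}$ the stabilizer of a representative $i_P$. Substituting this into Wielandt and reorganizing via Möbius bookkeeping should reduce the identity to Wielandt's formula for $T$ under the point-stabilizer $H_1\le H$, handled by a secondary induction on $|H|$. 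Cleanly executing this combinatorial reduction is where I expect the real work to lie.
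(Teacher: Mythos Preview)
Your elementary abelian case is exactly the paper's argument: both compute $|K|\dim_{\FF_p}\C_N(K)=\sum_{k\in K}\chi(k)$ via the averaging idempotent and then invoke the additive Möbius identity $H^+=\sum_{Z\in\mathcal{Z}}f(Z)Z^+$. Your chief-series induction step is also fine, since coprimeness gives $\C_{N/M}(K)=\C_N(K)M/M$ and hence the multiplicativity $|\C_N(K)|=|\C_M(K)|\,|\C_{N/M}(K)|$ that you need.

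The substantive divergence is in the reduction. The paper never passes through a chief series and therefore never meets a nonabelian factor: it invokes the Hartley--Turull theorem, which says that under a coprime action there is an isomorphism of $H$-\emph{sets} from $N$ to a direct product of elementary abelian groups. Since Wielandt's identity involves only the numbers $|\C_N(Z)|$, i.e., only $H$-set data, one may replace $N$ outright by that abelian model and then split over Sylow subgroups. This eliminates your case $N\cong T^k$ with $T$ nonabelian simple. Your proposed reduction there---writing $|\C_N(K)|=\prod_P|\C_T(K_{i_P})|$ and descending to the point stabiliser $H_1$ acting on $T$---is correct at the level of the fixed-point count, but turning it into a proof of Wielandt's identity is not straightforward: the weights $f(Z)$ are computed in the cyclic lattice of $H$, a single cyclic $Z$ contributes one stabiliser $Z_{i_P}\le H_1$ per orbit (with multiplicities $|P|$), and there is no clean map from cyclic subgroups of $H$ to cyclic subgroups of $H_1$ that transports the Möbius weights. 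Carrying this through amounts to reproving, by hand, that the class function $Z\mapsto|Z|\log|\C_N(Z)|$ on cyclic subgroups comes from a genuine class function on $H$---which is precisely what Hartley--Turull hands you for free. Replace your nonabelian case with a citation of that theorem and the proof is complete.
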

\begin{proof}
Since Wielandt's paper is hard to follow (even for a German native speaker), we use some modern ingredients. We consider $N$ as a $H$-set via conjugation. By a theorem of Hartley--Turull~\cite[Lemma~2.6.2]{HartleyTurull} (see also \cite[Satz~9.20]{SambalePG}), there exist a direct product $A$ of elementary abelian groups and an isomorphism of $H$-sets $\phi:N\to A$, i.\,e. $\phi(n^h)=\phi(n)^h$ for $n\in N$ and $h\in H$. It follows that $\phi(\C_N(Z))=\C_A(Z)$ for $Z\in\mathcal{Z}$. Thus, we may replace $N$ by $A$. Then $N$ decomposes into its (characteristic) Sylow subgroups $N=N_1\times\ldots\times N_k$. Since $\C_N(Z)=\C_{N_1}(Z)\times\ldots\times\C_{N_k}(Z)$, we may assume further that $N=N_1$ is elementary abelian. Thus, $N$ is an $\FF_pH$-module for some prime $p$ not dividing $|H|$. The corresponding Brauer character $\chi:H\to\CC$ can be regarded as an ordinary character since $|H|$ is coprime to $p$. We further extend $\chi$ to the complex group algebra $\CC H$.
For $S\subseteq H$ let $S^+:=\sum_{s\in S}s\in\CC H$. 
The additive version of \eqref{eq:moebius} reads 
\[H^+=\sum_{Z\in\mathcal{Z}}f(Z)Z^+.\] 
By the first orthogonality relation, $\chi(Z^+)=|Z|[\chi_Z,1_Z]$, where $[\chi_Z,1_Z]$ is the multiplicity of the trivial character $1_Z$ as a constituent of the restriction $\chi_Z$. On the other hand, we have $|\C_N(Z)|=p^{[\chi_Z,1_Z]}$. It follows that
\[|\C_N(H)|^{|H|}=p^{|H|[\chi,1_H]}=p^{\chi(H^+)}=p^{\sum_{Z\in\mathcal{Z}}|Z|[\chi_Z,1_Z]f(Z)}=\prod_{Z\in\mathcal{Z}}|\C_N(Z)|^{|Z|f(Z)}.\qedhere\]
\end{proof}

The proof of \autoref{wielandt} relies on the Feit--Thompson theorem to guarantee that $H$ or $N$ is solvable. 
In comparison, the proof of \autoref{NR} does not require representation theory, but uses the fact that Sylow subgroups are nilpotent.

\section{The proof of \autoref{additive}}

A $\pi$-element $x\in G$ lies in a Hall $\pi$-subgroup $H$ if and only if $x\in\N_G(H)$. Hence, the map $\lambda_G:H\to\ZZ$ is the permutation character of the conjugation action of $H$ on the set $\Hall_\pi(G)$ of all Hall $\pi$-subgroups of $G$ (we do not assume that these subgroups are conjugate in $G$). We use the following recipe to construct a related character.

\begin{Thm}\label{thmsym}
Let $\chi$ be a character of a finite group $H$ and let $\alpha$ be a character of a permutation group $A\le S_n$. For $a\in A$ let $c_i(a)$ be the number of cycles of $a$ of length $i$. Then the map $\chi_\alpha:H\to\CC$ with 
\[\chi_\alpha(h)=\frac{1}{|A|}\sum_{a\in A}\alpha(a)\prod_{i=1}^n\chi(h^i)^{c_i(a)}\]
for $h\in H$ is a character or the zero map.
\end{Thm}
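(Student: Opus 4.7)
The plan is to realize $\chi_\alpha$ as the character of an explicit $H$-module, following the classical Schur--Weyl style construction. Let $V$ be a $\CC H$-module affording $\chi$ and let $Y$ be a $\CC A$-module affording $\alpha$. I turn $W:=V^{\otimes n}$ into a module for $H\times S_n$ by letting $H$ act diagonally and $S_n$ permute the tensor factors. The first key step is the classical cycle-index identity
\[\tr\bigl((h,a)\mid W\bigr)=\prod_{i=1}^n\chi(h^i)^{c_i(a)}\qquad(h\in H,\ a\in A),\]
proved by expanding on the tensor basis $v_{j_1}\otimes\dots\otimes v_{j_n}$ arising from a basis $v_1,\dots,v_m$ of $V$: only index tuples that are constant on every cycle of $a$ contribute to the diagonal, and each cycle of length $i$ contributes a factor $\tr(h^i\mid V)=\chi(h^i)$.

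Next, I restrict $W$ to $H\times A$ and form $Z:=Y\otimes W$, where $A$ acts simultaneously on $Y$ (through $\alpha$) and on $W$ (through its embedding in $S_n$), while $H$ acts only on $W$. Since the $H$- and $A$-actions on $Z$ commute, the subspace $Z^A$ of $A$-fixed vectors is an $H$-submodule of $Z$. The averaging idempotent $\frac{1}{|A|}\sum_{a\in A}a$, together with the fact that $\tr\bigl((h,a)\mid Z\bigr)=\alpha(a)\tr\bigl((h,a)\mid W\bigr)$ (because $H$ acts trivially on $Y$), yields
\[\tr(h\mid Z^A)=\frac{1}{|A|}\sum_{a\in A}\alpha(a)\prod_{i=1}^n\chi(h^i)^{c_i(a)}=\chi_\alpha(h)\]
for every $h\in H$. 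Hence $\chi_\alpha$ is the character of the $H$-module $Z^A$, and it is the zero map precisely when $Z^A=0$.

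The only non-routine ingredient is the cycle-index identity, which is classical. Once it is available, building the $H\times A$-module $Z$, recognising that $Z^A$ is an $H$-submodule, and computing its character via the averaging projector are all standard. The main obstacle is therefore simply writing the tensor-trace computation cleanly; an alternative that avoids bookkeeping with bases is to diagonalise $h$ on $V$, reducing the trace on $V^{\otimes n}$ to a sum of products of eigenvalues indexed by cycles of $a$.
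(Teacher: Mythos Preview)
Your argument is correct. The paper does not actually prove this theorem; it simply cites Kerber's \emph{Applied Finite Group Actions}, so there is no in-paper proof to compare against. What you have written is exactly the standard construction behind that reference: the $H\times S_n$-module $V^{\otimes n}$, the cycle-index trace identity $\tr\bigl((h,a)\mid V^{\otimes n}\bigr)=\prod_i\chi(h^i)^{c_i(a)}$, and the passage to $(Y\otimes V^{\otimes n})^A$ via the averaging idempotent. The only caveat is that your basis description of the trace (``only tuples constant on cycles contribute to the diagonal'') is literally true in an eigenbasis of $h$; since $H$ is finite and we are over $\CC$, every $h$ is diagonalisable, and you already note this alternative at the end, so the argument goes through cleanly.
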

\begin{proof}
See \cite[Theorem~7.7.7 and Eq.~(7.7.9)]{KerberAFGA}.
\end{proof}

We assume the notation of \autoref{additive} and choose a cyclic subgroup $A\le S_n$ generated by a cycle of length $n$. Let $\alpha\in\Irr(A)$ be a faithful character. For $B\le A$ of order $d$, $\sum_{b\in[B]}\alpha(b)$ is the sum of the primitive roots of unity of order $d$. A simple Möbius inversion shows that this sum equals $\mu(d)$. Moreover, every $b\in[B]$ is a product of $n/d$ disjoint cycles of length $d$. Thus, $c_d(b)=n/d$ and $c_i(b)=0$ for $i\ne d$.

We apply \autoref{thmsym} with $\chi=\lambda_G$. For $h\in H$ we compute
\[\chi_\alpha(h)=\frac{1}{n}\sum_{d\,\mid\, n}\sum_{\substack{a\in A\\|\langle a\rangle|=d}}\alpha(a)\lambda_G(h^d)^{\frac{n}{d}}=\frac{1}{n}\sum_{d\,\mid\, n}\mu(d)\lambda_G(h^d)^{\frac{n}{d}}.\]
Taking the scalar product of $\chi_\alpha$ and the trivial character of $H$, shows that
\[\beta_G(H):=\frac{1}{n^2}\sum_{h\in H}\sum_{d\,\mid\, n}\mu(d)\lambda_G(h^d)^{\frac{n}{d}}\]
is a non-negative integer. 
This confirms the first part of \autoref{additive}.

If $H\unlhd G$, then $\lambda_G(h)=1$ for all $h\in H$ and it follows that $\beta_G(H)=0$ unless $H=1$ (where $\beta_G(H)=1$). Now assume that $H$ is not normal in $G$. In particular, $n>1$.
Let $t:=|\Hall_\pi(G)|$. If $t=2$, then $|G:\N_G(H)|=2$ and $\N_G(H)\unlhd G$. But this would imply that $H^g=\pcore_{\pi}(\N_G(H))=H$ for every $g\in G$. Hence, $t\ge 3$. 

Next we investigate the contribution of $d=1$ to $\beta_G(H)$. Note that $\lambda_G(h)^n$ is the number of fixed points of $h$ on $\Hall_\pi(G)^n$, acting diagonally. The number of orbits of $H$ on $\Hall_\pi(G)$ is at most $t^n/n$. 
Using Burnside's lemma and the trivial estimate $\lambda_G(h^d)\le t$ for $d\ne 1$ and $h\in H$, we obtain
\[
n\beta_G(H)\ge \frac{1}{n}\sum_{h\in H}\lambda_G(h)^n-\frac{1}{n}\sum_{h\in H}\sum_{1\,\ne\, d\,\mid\, n}t^{\frac{n}{d}}\ge \frac{t^n}{n}-\sum_{1\,\ne\, d\,\mid\, n}t^{\frac{n}{d}}.
\]
It suffices to show that
\[n\sum_{1\,\ne\, d\,\mid\, n}t^{\frac{n}{d}}<t^n.\]
If $n$ is a prime, this reduces to $nt<t^n$ and we are done as $t\ge 3$. If $n=4$ and $t=3$, the claim can be checked directly. In all other cases, one can verify that $n\le t^{\frac{n}{2}-1}$ and
\[n\sum_{1\,\ne\, d\,\mid\, n}t^{\frac{n}{d}}\le \sum_{k=0}^{n-1}t^k=\frac{t^n-1}{t-1}<t^n.\]
This finishes the proof.

We remark that the degree $\chi_\alpha(1)=\frac{1}{n}\sum_{d\mid n}\mu(d)t^{n/d}$ has several interesting interpretations. For instance, if $t$ is a prime power, then $\chi_\alpha(1)$ is the number of irreducible polynomials of degree $n$ over the finite field $\FF_t$ (see \cite[Corollary~10.2.3]{Roman}). 
In general, $\chi_\alpha(1)$ is the rank of the $n$-th quotient of the lower central series of a free group of rank $t$ (see \cite[Theorem~5.11 and Corollary~5.12]{MKS}). 

We now give an interpretation of $\beta_G(H)$ in a special case. Suppose that $H$ is nilpotent with regular Sylow subgroups (for all primes). Then the $d$-powers form a subgroup $H^d$, and for every $h\in H^d$ there exist exactly $|H:H^d|$ elements $x\in H$ with $x^d=h$ (see \cite[Hauptsatz~III.10.5 and Satz~III.10.6]{Huppert}). Burnside's lemma applied to $H^d$ acting on $\Hall_\pi(G)^{n/d}$ yields
\[\beta_G(H)=\frac{1}{n}\sum_{d\,\mid\, n}\mu(d)f_{n/d}(H^d),\]
where $f_{n/d}(H^d)$ is the number of orbits of $H^d$ on $\Hall_\pi(G)^{n/d}$.

Finally, we comment on a curiosity. In order to produce similar quantities as $\beta_G(H)$, we may replace $\lambda_G$ by any character of any finite group $G$. For instance, let $\tau$ be the conjugation character of $G=A_5$ on $\Syl_3(G)$. We compute
\[\frac{1}{60^2}\sum_{d\,\mid\, 60}\mu(d)\sum_{g\in G}\tau(g^d)^{\frac{n}{d}}=277777777777777777777777777773333333332754803832758090933.\]
Similar curious numbers arise from $G\in\{S_5,\PSL(2,9),\PGL(2,9)\}$. This can be explained by the presents of large powers of $\tau(1)=10$.

\end{document}